\documentclass[a4paper,12pt]{article}     
\usepackage{amsmath,amscd,amssymb}        
\usepackage{latexsym}                     
\usepackage[english, german]{babel}                

\usepackage{theorem}                 

\textheight 21.1cm
\textwidth 14cm
\oddsidemargin 7mm

\newtheorem{theorem}{Theorem}



\newenvironment{example}
{\smallskip\noindent{\bf Example\/}.}{\smallskip\par}

\newenvironment{remarks}
{\smallskip\noindent{\bf Remarks\/}.}{\smallskip\par}
\newenvironment{proof}
{\noindent{\it Proof\/}.}{{ \hfill $\Box$}\smallskip\par}


\newcommand{\CC}{{\mathbb C}}

\newcommand{\ZZ}{{\mathbb Z}}

\newcommand{\ww}{\underline{w}}

\newcommand{\rk}{{\rm rk}\,}

\title{Monodromy of dual invertible polynomials}
\author{W.~Ebeling and S.~M.~Gusein-Zade
\thanks{Partially supported by the DFG (Eb 102/7--1), RFBR--10-01-00678, NSh--8462.2010.1.
Keywords: invertible polynomials, monodromy, zeta functions, Saito duality.
AMS Math. Subject Classification: 32S05, 32S40, 14J33.
}
}
\date{}

\begin{document}
\selectlanguage{english}

\maketitle

\begin{abstract}
A generalization of Arnold's strange duality to invertible polynomials in three variables by the first author and A.~Takahashi includes the following relation. For some invertible polynomials $f$ the Saito dual of the reduced monodromy zeta function of $f$ coincides with a formal ``root'' of the reduced monodromy zeta function of its Berglund-H\"ubsch transpose $f^T$. Here we give a geometric interpretation of  ``roots'' of the monodromy zeta function and generalize the above relation to all non-degenerate invertible polynomials in three variables and to some polynomials in an arbitrary number of variables in a form including  ``roots'' of the monodromy zeta functions both of $f$ and $f^T$.
\end{abstract}

\section*{Introduction} 
V.~Arnold observed a strange duality between the 14 exceptional unimodular singularities \cite{Ar}. One of the features of this duality is that dual singularities have Saito dual characteristic polynomials of the classical monodromy transformations \cite{S1, S2}.  For a fixed positive integer $d$ and for a rational function $\varphi(t)$ of the form $\prod\limits_{m|d}(1-t^m)^{s_m}$, the Saito dual 
of $\varphi$ with respect to $d$ is
$$
\varphi^\ast(t) = \prod\limits_{m|d}(1-t^{d/m})^{-s_m}.
$$
(For quasihomogeneous singularities, in particular for the quasihomogeneous representatives of the 14 exceptional unimodular singularities, $d$ is the (quasi)de\-gree of the function.)

A system of weights $\ww=(w_1, \ldots, w_n;d)$ for a quasihomogeneous  polynomial $f$  defines a natural grading on the ring $\CC[x_1, \ldots, x_n]/(f)$ with the Poincar\'e series
$$
P_{\ww}(t)=\frac{1-t^{d}}{\prod\limits_{j=1}^n (1-t^{w_j})}\,.
$$
Here, in general, we do dot assume that the system of weights is reduced, i.e., we permit the greatest common divisor of the weights $w_1, \ldots , w_n$ to be greater than 1.
The system of weights $\ww$ defines a $\CC^\ast$-action on $\CC^n$:
$$
\lambda\ast(x_1, \ldots, x_n)=(\lambda^{w_1}x_1, \ldots, \lambda^{w_n}x_n).
$$
This action may have orbits of different types (i.e., with different isotropy subgroups) on the zero-level set $X_f$ of the polynomial $f$. The orbit invariants give rise to a rational function ${\rm Or}_{\ww}(t)$: see, e.g., \cite{Trieste}.

Let $\widetilde{\zeta}_f(t)=\zeta_f(t)/(1-t)$ be the reduced zeta function of the classical monodromy transformation $h_f:V_f\to V_f$ of $f$ ($V_f$ is the Milnor fibre of $f$). For $n=3$ and for an isolated singularity $f$, the reduced zeta function $\widetilde{\zeta}_f(t)$ coincides with the characteristic polynomial of the monodromy transformation. In \cite{MRL} it was shown that, for the reduced system of weights $\ww$ of $f$, one has
\begin{equation}\label{ourSaito}
P_{\ww}(t){\rm Or}_{\ww}(t)=\widetilde{\zeta}^\ast_f(t),
\end{equation}
where $\widetilde{\zeta}^\ast_f(t)$ is the Saito dual (with respect to the degree $d$) of the reduced zeta function $\widetilde{\zeta}_f(t)$.

Arnold's strange duality can be considered as a special case of mirror symmetry. In \cite{ET} it was shown that Arnold's strange duality can be generalized to a duality between so called invertible polynomials (see Section~\ref{Sect:Inv}). For an invertible polynomial $f$ the dual polynomial is the Berglund-H\"ubsch transpose $f^T$ of $f$ (Section~\ref{Sect:Inv}). The relation between the characteristic polynomials of the monodromy transformations of the dual exceptional unimodular singularities generalizes in the following way.

For a rational function $\varphi(t)$ of the form $\prod\limits_m(1-t^m)^{s_m}$, its $k$th ``power'' $\varphi^{(k)}(t)$ is defined by
$$
\varphi^{(k)}(t)=\prod_m(1-t^{\frac{m}{\gcd{(m,k)}}})^{\gcd{(m,k)}s_m}.
$$
If $\varphi(t)$ is the zeta function of a transformation $h:Z\to Z$, then $\varphi^{(k)}(t)$ is the zeta function of the $k$th power $h^k$ of the transformation $h$.

For a rational function $\varphi(t)=\prod\limits_m(1-t^m)^{s_m}$, its (formal) $k$th ``root'' (or ``root'' of degree $k$) is a rational function $\psi(t)=\varphi^{(1/k)}(t)$ of the same form such that $\psi^{(k)}(t)=\varphi(t)$. A $k$th ``root'' of a rational function does not always exist and, in general, is not well defined. Note that the function $\varphi(t)=\prod\limits_m(1-t^m)^{s_m}$ has a ``root'' of degree $k$ if and only if each factor $(1-t^m)^{s_m}$ has a ``root'' of this degree.

An invertible polynomial has a canonical system of weights which may be non-reduced (Section~\ref{Sect:Inv}).
In \cite{ET} it was shown that, if $n=3$ and $f$ is a non-degenerate invertible polynomial with the canonical system of weights being reduced, then
\begin{equation}\label{ET}
P_{\ww}(t){\rm Or}_{\ww}(t)=\widetilde{\zeta}^{(1/c^T)}_{f^T}(t),
\end{equation}
where $c^T$ is the greatest common divisor of the canonical weights of $f^T$.
Together with (\ref{ourSaito}) this implies that in this case
\begin{equation}\label{dual_reduced}
\widetilde{\zeta}^\ast_f(t)=\widetilde{\zeta}^{(1/c^T)}_{f^T}(t).
\end{equation}

We shall show that the equation (\ref{dual_reduced}) has a generalization of the form
\begin{equation}\label{duality}
\widetilde{\zeta}^{(1/c)\ast}_f(t)=\left(\widetilde{\zeta}^{(1/c^T)}_{f^T}(t)\right)^{(-1)^{n-1}}
\end{equation}
for a number of cases when the canonical systems of weights both of $f$ and $f^T$ may be non-reduced and also when $n$ is arbitrary. Moreover, we show that in these cases the ``roots'' of the zeta functions are zeta functions of roots of the monodromy transformations defined by a geometric construction.

\section{Invertible polynomials} \label{Sect:Inv}
A quasihomogeneous polynomial $f$ in $n$ variables is said to be invertible (see, e.g., \cite{Kr}) if it is of the form
\begin{equation}\label{inv}
f(x_1, \ldots, x_n)=\sum\limits_{i=1}^n a_i \prod\limits_{j=1}^n x_j^{E_{ij}}
\end{equation}
for some coefficients $a_i\in\CC^\ast$ and for a matrix $E=(E_{ij})$ with positive integer entries and with $\det E\ne 0$.
For simplicity we assume that $a_i=1$ for $i=1, \ldots, n$. (This can be achieved by a rescaling of the variables $x_j$.) 
An invertible quasihomogeneous polynomial $f$ is non-degenerate if it has (at most) an isolated critical point at the origin in $\CC^n$.

According to \cite{KS}, an invertible polynomial $f$ is non degenerate if and only if it is a (Thom-Sebastiani) sum of invertible polynomials (in groups of different variables) of the following types:
\begin{enumerate}
\item[1)] $x_1^{p_1}x_2 + x_2^{p_2}x_3 + \ldots + x_{m-1}^{p_{m-1}}x_m + x_m^{p_m}$ (chain type; $m\ge 1$);
\item[2)] $x_1^{p_1}x_2 + x_2^{p_2}x_3 + \ldots + x_{m-1}^{p_{m-1}}x_m + x_m^{p_m}x_1$ (loop type; $m\ge 2$).
\end{enumerate}

\begin{sloppypar}

An invertible polynomial (\ref{inv}) has a canonical system of weights $\ww=(w_1, \ldots, w_n;d)$, where $w_i$ is the determinant of the matrix $E$ with the $i$th column substituted by $(1, \ldots, 1)^T$ and $d=\det E$. One has $f(\lambda^{w_1}x_1, \ldots, \lambda^{w_n}x_n) = \lambda^d f(x_1, \ldots, x_n)$. The canonical system of weights may be non-reduced, i.e., one may have $c=\gcd(w_1, \ldots, w_n)\ne 1$. 

\end{sloppypar}

\begin{example}
Let $f(x_1,x_2,x_3)=x_1^3x_2+x_2^4x_3+x_3^5$. One has $\ww=(16,12,12;60)$, $c=4$.
\end{example}

The Berglund-H\"ubsch transpose $f^T$ of the invertible polynomial (\ref{inv}) is defined by
$$
f^T(x_1, \ldots, x_n)=\sum\limits_{i=1}^n a_i \prod\limits_{j=1}^n x_j^{E_{ji}}\,.
$$
If the invertible polynomial $f$ is non-degenerate, then $f^T$ is non-degenerate as well.

If the canonical system of weights of $f$ is reduced, the canonical system of weights of $f^T$ can be non-reduced. The canonical (quasi)degrees of $f$ and $f^T$ coincide.

\begin{example} Let $f(x_1,x_2,x_3)=x_1^5x_2+x_2^2+x_3^3$. Then $f^T(x_1,x_2,x_3)=x_1^5+x_1x_2^2+x_3^3$. One has $\ww_f = (3,15,10;30)$, $c=1$, $\ww_{f^T}=(6,12,10;30)$, $c^T=2$.
\end{example}

\section{Geometric roots of the monodromy}
Let $f$ be a  quasihomogeneous polynomial in $n$ variables with weights $w_1, \ldots, w_n$ and degree $d$. (We do not suppose, in general, the system of weights to be reduced.) The monodromy transformation $h_f$ of $f$ can be constructed in the following way. Let $\Gamma_t:\CC^n\to\CC^n$ be defined by $$
\Gamma_t(x_1, \ldots, x_n)=\left(\exp\left({2\pi i w_1t}/{d}\right)x_1, \ldots, \exp\left({2\pi i w_nt}/{d}\right)x_n \right).
$$
The map $\Gamma_t$ sends the Milnor fibre $V_1=f^{-1}(1)$ to the Milnor fibre $V_{\exp(2\pi it)}=f^{-1}(\exp(2\pi it))$. Then $h_f=\Gamma_1:V_1\to V_1$.

Suppose that there exists an action of the cyclic group $\ZZ_k$ on $\CC^n$ such that the generator of $\ZZ_k$ acts by 
$$
\Sigma(x_1, \ldots, x_n)=(\sigma^{m_1}x_1, \ldots, \sigma^{m_n}x_n),
$$
where $\sigma=\exp\left({2\pi i}/{k}\right)$, and
$$
f(\Sigma(x_1, \ldots, x_n))=\sigma f(x_1, \ldots, x_n).
$$
Let $\widehat{h}_f=\Sigma^{-1}\circ\Gamma_{1/k}$. The map $\widehat{h}_f$ sends the Milnor fibre $V_1$ to itself.
Since $\Gamma_t$ and $\Sigma$ commute, one has $\widehat{h}_f^k=\Sigma^{-k}\circ\Gamma_1=h_f$.
We shall call $\widehat{h}_f$ {\em a geometric  root} of degree $k$ of the monodromy transformation $h_f$.

One can see that the map $\widehat{h}_f$ is the monodromy transformation of the germ $f^k$ on the quotient $\CC^n/\ZZ_k$.

For an invertible polynomial (\ref{inv}) the required action of the group $\ZZ_k$ exists if and only if the system of equations
\begin{equation}\label{modk}
E \left( \begin{array}{c} m_1 \\ \vdots \\ m_n \end{array} \right) \equiv \left( \begin{array}{c} 1 \\ \vdots \\ 1 \end{array} \right) \mbox{ mod } k
\end{equation}
has a solution (in $\ZZ_k^n$). If $k$ is prime this means that 
$$\rk E_k=\rk\left( E_k \left| \begin{array}{c} 1 \\ \vdots \\ 1 \end{array} \right) \right. ,$$ 
where $E_k$ is the matrix obtained from $E$ by reduction modulo
$k$. If in addition $\rk E_k<n$, a solution of ($\ref{modk}$) is not unique.

\section{Duality of zeta functions}

\begin{theorem} \label{theo1}
Let $f$ be a non-degenerate invertible polynomial in $n$ variables of chain or loop type and let $c$ be the greatest common divisor of the canonical weights of $f$. Then there exist geometric roots of degree $c$ of the monodromy transformation $h_f$, all of them have the same (reduced) zeta function $\widetilde{\zeta}_f^{(1/c)}(t)$, and
\begin{equation}\label{eq_theo1}
\widetilde{\zeta}_{f^T}^{(1/c^T)}(t) =
\left(\widetilde{\zeta}_{f}^{(1/c)^\ast}(t)\right)^{(-1)^{n-1}}\, ,
\end{equation}
where $f^T$ is the Berglund-H\"ubsch transpose of $f$, $c^T$ is the greatest common divisor of the corresponding weights, and ${}^\ast$ means the Saito dual with respect to the common canonical degree $d=d^T$ of $f$ and $f^T$. 
\end{theorem}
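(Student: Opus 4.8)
The plan is to reduce everything to explicit computations with the exponent matrix $E$ of a single chain or loop — the assumption is that $f$ is one such atom, and a first remark is that its Berglund--H\"ubsch transpose $f^{T}$ is again an atom of the same kind (the chain $(p_m,\dots,p_1)$ in the reversed variables if $f$ is the chain $(p_1,\dots,p_m)$, a loop if $f$ is a loop), so every formula proved for $f$ applies verbatim to $f^{T}$. I would begin with the existence of geometric roots of degree $c$, which by the previous section amounts to solving the congruence (\ref{modk}) with $k=c$. Since $E\ww^{T}=d\cdot(1,\dots,1)^{T}$ gives $E(\ww^{T}/c)=(d/c)(1,\dots,1)^{T}$, the reduction of $(d/c)^{-1}(\ww^{T}/c)$ modulo $c$ solves (\ref{modk}) whenever $\gcd(d/c,c)=1$. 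For chains and loops I would write the canonical weights out from the recursion $p_iw_i+w_{i+1}=d$ (and its cyclic analogue) — for a chain, $w_i=(p_1\cdots p_{i-1})\,Q_i$ with $Q_i=p_{i+1}\cdots p_m-p_{i+2}\cdots p_m+\cdots\pm1$ — and use these expressions to verify solvability of (\ref{modk}) also in the remaining case $\gcd(d/c,c)>1$; by the Chinese Remainder Theorem this reduces to the case of a prime power exactly dividing $c$, treated by the rank considerations of the previous section. At this point one also notes that the solution of (\ref{modk}) need not be unique, which is why the theorem asserts only that all geometric roots share the \emph{same} zeta function.

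The second step is the computation of $\widetilde{\zeta}_{\widehat h_f}(t)$ for $\widehat h_f=\Sigma^{-1}\circ\Gamma_{1/c}$. I would stratify the Milnor fibre $V_f$ by the coordinate loci $\{x_i=0\}$; on each stratum $h_f$ (which is of finite order, $h_f^{d}=\mathrm{id}$) acts by a monomial map of finite order, and the standard formula for the zeta function of a finite-order map as a product of $(1-t^{l})$-factors weighted by Euler characteristics of the orbit strata shows that the strata on which $f$ restricts to a ``linear pencil'' contribute trivially, leaving a product over the strata on which the restriction is a smaller chain or loop with a Thom--Sebastiani top monomial. For a chain this product telescopes to an explicit closed form for $\widetilde{\zeta}_f(t)$ governed by the partial products $p_1\cdots p_j$ and the alternating sums $Q_j$. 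The same computation applied to $\widehat h_f$ — still a finite-order monomial map on each stratum, but now with orbit lengths multiplied by a divisor of $c$ depending only on the stratum — gives a product for $\widetilde{\zeta}_{\widehat h_f}(t)$; comparing its exponents with the definition of $\varphi^{(1/c)}$ identifies it with the canonical root $\widetilde{\zeta}_f^{(1/c)}(t)$, and since every stratum contributes the same factor regardless of the chosen solution of (\ref{modk}), this is independent of the particular geometric root. (That the outcome is \emph{some} $c$-th root is forced a priori by $\widehat h_f^{\,c}=h_f$; the content is that it is the canonical one.)

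Finally, for the duality (\ref{eq_theo1}): since the product formula of Step~2 applies to $f^{T}$ as well, it remains to compare the two products term by term. Applying the Saito substitution $m\mapsto d/m$ with a sign change of the exponents to $\widetilde{\zeta}_f^{(1/c)}(t)$, one must check that the result equals $\bigl(\widetilde{\zeta}_{f^T}^{(1/c^{T})}(t)\bigr)^{(-1)^{n-1}}$; the interchange of $c$ with $c^{T}$ and the fact that the degrees in the two products are Saito-conjugate with respect to the common degree $d=d^{T}$ both issue from the relation between the weights of $f$ and of $f^{T}$ induced by $E\leftrightarrow E^{T}$. I expect this last bookkeeping to be the main obstacle: in the non-reduced case one must keep track precisely of which factors of $\widetilde{\zeta}_f$ acquire the $(1/c)$-root, of how the Saito involution permutes the factors, and of how the sign $(-1)^{n-1}$ is produced, and do so uniformly for chains, for loops, and for every $n$. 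A convenient way to organise this is an induction on the length $m$ of the atom using the substitution $x_1\mapsto0$, which turns a chain of length $m$ into a chain of length $m-1$, together with the multiplicative behaviour of all the invariants in play — zeta functions, Saito duals, ``roots'', and the cyclic groups $\ZZ_c$ — under Thom--Sebastiani sums with one extra variable.
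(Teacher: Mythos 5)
Your overall strategy coincides with the paper's: explicit canonical weights for the chain and the loop, a solution of (\ref{modk}) with $k=c$ producing $\widehat{h}_f=\Sigma^{-1}\circ\Gamma_{1/c}$, a stratum-by-stratum computation of its zeta function on the coordinate tori, and a final comparison with $f^T$. But the two steps that carry the real content are not actually done. First, your existence argument for geometric roots only works when $\gcd(d/c,c)=1$: you invert $d/c$ modulo $c$ to turn $E\,(w_1/c,\dots,w_n/c)^T=(d/c)(1,\dots,1)^T$ into a solution of (\ref{modk}). That hypothesis genuinely fails for chains --- e.g. $x_1^{8}x_2+x_2^{5}$ has weights $(4,8)$, $d=40$, $c=4$, $d/c=10$ --- and your fallback (CRT plus ``the rank considerations of the previous section'') does not close it, because the rank criterion there is stated only for \emph{prime} $k$, not for prime powers. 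The paper instead writes the solutions in closed form, $m_1=m$, $m_2=1-mp_1$, $m_j=(-1)^j\langle p_2,\dots,p_{j-1}\rangle+(-1)^{j-1}mp_1\cdots p_{j-1}$ with $m$ arbitrary mod $c$ (the same formulas serving the loop case); something of this kind is what you would need to supply.

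Second, the identification of the zeta function of $\widehat{h}_f$ with the canonical root, and the duality (\ref{eq_theo1}) itself, are asserted rather than proved. The identification rests on a number-theoretic fact: on the torus $\{x_i=0\ \mbox{for}\ i<j,\ x_i\neq 0\ \mbox{for}\ i\geq j\}$ the order of $\Sigma^{-1}\circ\Gamma_{1/c}$ is exactly $p_j\cdots p_n$ at every point, i.e.\ the rotation numbers, after cancelling $c$, are coprime to $p_j\cdots p_n$; the paper proves this by showing that any common factor of $p_s$ with the numerator divides all the weights and hence divides $c$, and this argument is also what gives independence of the chosen solution $m$. Your phrase ``orbit lengths multiplied by a divisor of $c$ depending only on the stratum'' skips exactly this point. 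Finally, the closing comparison with $f^T$ is where you say the ``bookkeeping'' is the main obstacle, and the inductive device you propose (Thom--Sebastiani behaviour under $x_1\mapsto 0$) is not available: a chain of length $m$ is not a Thom--Sebastiani sum of a chain of length $m-1$ and a monomial, since $x_1^{p_1}x_2$ couples the variables. Once the closed forms are in hand this step is a short direct computation --- for the chain, $\widetilde{\zeta}_f^{(1/c)}(t)=(1-t)^{-1}\prod_{j=2}^{n}(1-t^{p_j\cdots p_n})^{(-1)^{n-j}}(1-t^d)^{(-1)^{n-1}}$ compared with the same expression for $f^T$ with the $p_i$ reversed, and for the loop both reduced roots equal $(1-t^d)^{(-1)^{n-1}}/(1-t)$ --- but it must be carried out: it is the assertion of the theorem, not an expected formality.
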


\begin{proof}
Let 
$$
f(x_1, \ldots, x_n)=x_1^{p_1}x_2 + x_2^{p_2}x_3 + \ldots + x_{n-1}^{p_{n-1}}x_n + x_n^{p_n}
$$
be a polynomial of chain type. Let
$$
\langle q_1, \ldots, q_k\rangle := q_1\cdots q_k -  q_2\cdots q_k + \ldots + (-1)^{k-1}q_k+(-1)^k.
$$
The canonical weights of $f$ are: 
$$\langle p_2, \ldots, p_n\rangle, p_1\langle p_3, \ldots, p_n\rangle , \dots, p_1\cdots p_{n-2}\langle p_n\rangle, p_1\cdots p_{n-1};
$$
the degree is equal to $p_1\cdots p_{n}$. The equation (\ref{modk}) on the exponents defining the required $\ZZ_c$-action has the following solutions (their number being equal to $c$):
$m_1=m$, $m_2=1-mp_1$, 
$$m_j=(-1)^j\langle p_2, \ldots, p_{j-1}\rangle  + (-1)^{j-1}mp_1\cdots p_{j-1} \mbox{ for }j\ge 2,
$$
where $m$ is an arbitrary integer $\mod c$. The monodromy zeta function $\zeta_f(t)$ of $f$ can be computed, e.g., with the use of the Varchenko formula \cite{Var} and is equal to
$$
\zeta_f(t)=\prod_{j=1}^n \left(1-t^{p_j\cdots p_n/c_j}\right)^{(-1)^{n-j}c_j}\,,
$$
where $c_j=\gcd{(\langle p_{j+1} ,\ldots , p_n\rangle, p_j\langle p_{j+2} , \ldots ,p_n\rangle, \ldots, p_j\cdots p_{n-2}\langle p_n\rangle, p_j\cdots p_{n-1})}$. We shall prove that a ``root'' of degree $c$ of $\zeta_f(t)$ is equal to 
\begin{equation}\label{root}
\prod_{j=1}^n \left(1-t^{p_j\cdots p_n}\right)^{(-1)^{n-j}}\,.
\end{equation}
It looks rather envolved to check the number theoretical conditions on the exponents $p_j$ and the greatest common divisors $c_j$ and $c$ which give this fact. Moreover the formal ``root'' of $\zeta_f(t)$ may not be unique. Instead of that we shall show that the zeta function of the corresponding geometric root $\widehat{h}_f = \Sigma^{-1} \circ \Gamma_{1/c}$ is equal to (\ref{root}). This follows from the fact that, on the coordinate torus $\{(x_1, \ldots , x_n) \, : \, x_i=0 \mbox{ for }i <  j, x_i \neq 0 \mbox{ for } i \geq j\}$ corresponding to the variables $x_j, \ldots , x_n$, the order of the map $\Sigma^{-1} \circ \Gamma_{{1}/{c}}$ is equal to $p_j \cdots p_n$ (at each point), $j=1, \ldots , n$. 

One has
$$
\Sigma^{-1} \circ \Gamma_{{1}/{c}}(x_1, \ldots , x_n)= (\exp(2\pi i b_1)x_1, \ldots , \exp(2 \pi i b_n)x_n),
$$
where
$$b_j := \frac{(-1)^{j+1} \langle p_2, \ldots , p_n \rangle + (-1)^j mp_1 \cdots p_n}{c\,p_j \cdots p_n}, \quad j=1, \ldots , n.
$$
The numerator is obviously divisible by $c$ (because $w_1=\langle p_2, \ldots , p_n \rangle$ and $d=p_1 \cdots p_n$ are divisible by $c$). One has to show that the numerator divided by $c$ has no common factor with $p_j \cdots p_n$. One can see that a common factor of $p_s$ ($s=j, \ldots, n$) and the numerator is a common factor of $p_s$ and $\langle p_{s+1}, \ldots , p_n \rangle$. Therefore it is a common factor of all the weights $w_1, \ldots , w_n$ and thus divides $c$. Therefore, the numerator divided by $c$ has no common factor with $p_s$.

The polynomial $f^T$ is also given by the formula (\ref{root}) with $p_1, \ldots , p_n$ substituted by $p_n, \ldots , p_1$ respectively. One has
\begin{eqnarray*}
\widetilde{\zeta}_f^{(1/c)}(t) & = & (1-t)^{-1} \prod_{j=2}^n (1-t^{p_j \cdots p_n})^{(-1)^{n-j}} (1-t^d)^{(-1)^{n-1}}, \\
\widetilde{\zeta}_{f^T}^{(1/c^T)}(t) & = & (1-t)^{-1} \prod_{j=2}^n (1-t^{p_1 \cdots p_{j-1}})^{(-1)^j} (1-t^d)^{(-1)^{n-1}} 
\end{eqnarray*}
and therefore
$$
\widetilde{\zeta}_f^{(1/c)\ast}(t) =  (1-t^d)  \prod_{j=2}^n (1-t^{p_1 \cdots p_{j-1}})^{(-1)^{n-j-1}}(1-t)^{(-1)^n} = \left( \widetilde{\zeta}_f^{(1/c^T)} (t) \right)^{(-1)^{n-1}}.
$$

Let 
$$
f(x_1, \ldots, x_n)=x_1^{p_1}x_2 + x_2^{p_2}x_3 + \ldots + x_{m-1}^{p_{m-1}}x_m + x_m^{p_m}x_1$$
be a polynomial of loop type. The canonical weights of $f$ are
$$
\langle p_2, \ldots , p_n \rangle, \langle p_3, \ldots , p_n,p_1 \rangle, \ldots , \langle p_n, p_1, \ldots , p_{n-2} \rangle, \langle p_1, \ldots , p_{n-1} \rangle
$$
and the degree is equal to $d=p_1 \cdots p_n + (-1)^{n-1}$. The solutions of (\ref{modk}) are given by the same formulae as above. 

At first glance this looks somewhat strange: the systems of equations are different. However, one should keep in mind that they are solved modulo different greatest common divisors of the weights. Moreover, for the same exponents $p_1, \ldots , p_n$, the greatest common divisors of the weights for the chain and for the loop types are relatively prime.

The zeta function is
$$
\zeta_f(t)= (1- t^{d/c})^{(-1)^{n-1}c}.
$$
It is obvious that a formal ''root'' of degree $c$ of $\zeta_f(t)$ is equal to $(1-t^d)^{(-1)^{n-1}}$. Let us show that the zeta function of the geometric root $\widehat{h}=\Sigma^{-1} \circ \Gamma_{1/c}$ of degree $c$ of the monodromy transformation of $f$ is also equal to $(1-t^d)$. For that one has to show that the order of the map $\widehat{h}$ on the (maximal) torus $\{ (x_1, \ldots , x_n) \, : \, x_i \neq 0, i=1, \ldots , n \}$ 
is equal to $d$ at each point. One has
$$
\Sigma^{-1}\circ\Gamma_{1/c}(x_1, \ldots, x_n)=(\exp{(2\pi i b_1)}x_1, \ldots, \exp{(2\pi i b_n)}x_n)\,,
$$
where $b_j=\frac{w_j-m_jd}{cd}$. The greatest common divisor of all $w_j-m_jd$ and $d$ is just $c$ and therefore all 
ratios $\frac{w_j-m_jd}{c}$ ($j=1, \ldots, n$) together have no common factor with $d$. One has $\widetilde{\zeta}_{f}^{(1/c)}(t)=\widetilde{\zeta}_{f^T}^{(1/c^T)}(t)= (1-t^d)^{(-1)^{n-1}}/(1-t)$. This implies (\ref{eq_theo1}).
\end{proof}

\begin{example} Let $f(x_1,x_2,x_3)=x_1^3x_2+x_2^4x_3+x_3^5$, $f^T(x_1,x_2,x_3)=x_1^3+x_1x_2^4+x_2x_3^5$. One has $c=4$, $c^T=10$, $d=d^T=16$,
$$
\widetilde{\zeta}_f(t) = \frac{(1-t^5)(1-t^{60/4})^4}{(1-t^{20/4})^4(1-t)}, \quad 
\widetilde{\zeta}_{f^T}(t) = \frac{(1-t^3)(1-t^{60/10})^{10}}{(1-t^{12/2})^2(1-t)}.
$$
The reduced zeta functions of the geometric roots of the monodromy transformations of $f$ and $f^T$ are
$$
\widetilde{\zeta}^{(1/4)}_f(t) = \frac{(1-t^5)(1-t^{60})}{(1-t^{20})(1-t)}, \quad 
\widetilde{\zeta}^{(1/10)}_{f^T}(t) = \frac{(1-t^3)(1-t^{60})}{(1-t^{12})(1-t)}.
$$
They obviously are Saito dual. Note that a ''root'' of degree 4 of $\widetilde{\zeta}_f(t)$ is not well defined, e.g. 
$$
\frac{1-t^{60}}{(1-t^5)^3(1-t)}
$$
is another ''root''.
\end{example}

Let $f(x_1, x_2, x_3)=x_1^{2}+ x_2^{2} + x_3^{p}$. In this case $f^T=f$ and $f$ has a singularity of type $A_{p-1}$ at the origin. The reduced monodromy zeta function of $f$ is equal to $(1-t^p)/(1-t)$ and coincides with the reduced monodromy zeta function of the polynomial $x^p$ in one variable.
It is Saito self dual with respect to the degree $p$ of the polynomial $x^p$, but not with respect to the canonical degree of $f$ equal to $4p$. There exist geometric roots of the corresponding degrees for the polynomial $x^p$, but not for $f$.

\begin{theorem}\label{theo2}
Let $f$ be a non-degenerate invertible polynomial in 3 variables such that both $f$ and $f^T$ have critical points at the origin and f is not of the form $x_1^{2}+ x_2^2 + x_3^{p}$ (up to permutation of the variables).
Then the following statements hold:
\begin{itemize}
\item[{\rm 1)}]  If both reduced monodromy zeta functions $\widetilde{\zeta}_f(t)$ and $\widetilde{\zeta}_{f^T}(t)$ have ``roots'' (of degrees $c$ and $c^T$ respectively), then they have ``roots'' $\widetilde{\zeta}_f^{(1/c)}(t)$ and $\widetilde{\zeta}_{f^T}^{(1/c^T)}(t)$ such that
\begin{equation}\label{eq_theo2}
\widetilde{\zeta}_{f^T}^{(1/c^T)}(t) =
\widetilde{\zeta}_{f}^{(1/c)*}(t)\,,
\end{equation}
where $*$ means Saito dual with respect to the common canonical degree $d=d^T$.
\item[{\rm 2)}]   In all these cases except $f(x_1,x_2,x_3)=x_1^{p_1}x_2+x_2^{p_2}x_1+x_3^{p_3}$ with $p_3=p_1p_2-1$ and $\gcd{(p_1-1, p_2-1)}=1$, the monodromy transformations of both $f$ and $f^T$ have geometric roots, all geometric roots of each of them have the same reduced monodromy zeta functions and these functions are Saito dual to each other (with respect to $d=d^T$).
\item[{\rm 3)}]   If $\widetilde{\zeta}_f(t)$ has a ``root'' of degree $c$, then either $\widetilde{\zeta}_{f^T}(t)$ also has a ``root'' of degree $c^T$, or $f(x_1, x_2, x_3)=x_1^{2}x_2+ x_2^p + x_3^{p}$ with $p$ odd (and then $\widetilde{\zeta}_{f^T}(t)$ has no ``root'' of degree $c^T$).
\end{itemize}
\end{theorem}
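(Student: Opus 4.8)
The plan is to use the Kreuzer--Skarke classification to reduce to finitely many families and then argue case by case, relying on Theorem~\ref{theo1} for the indecomposable ones. Since $n=3$, up to permutation of the variables $f$ is a Thom--Sebastiani sum of chains and loops of total length $3$: (a) a chain of length $3$; (b) a loop of length $3$; (c) $x_1^{p_1}x_2+x_2^{p_2}$ plus $x_3^{p_3}$; (d) $x_1^{p_1}x_2+x_1x_2^{p_2}$ plus $x_3^{p_3}$; (e) the Fermat case $x_1^{p_1}+x_2^{p_2}+x_3^{p_3}$ (the hypotheses on critical points of $f$ and $f^T$ exclude the parameter values that make one of them smooth). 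In cases (a) and (b), Theorem~\ref{theo1} applies directly: for $n=3$ one has $(-1)^{n-1}=1$, so (\ref{eq_theo1}) is exactly (\ref{eq_theo2}); Theorem~\ref{theo1} also yields geometric roots of $h_f$ and $h_{f^T}$ with equal reduced zeta functions, and these are the sought ``roots'' of $\widetilde{\zeta}_f$ and $\widetilde{\zeta}_{f^T}$, so all three statements hold (and neither (a) nor (b) is of the two excluded forms, which live in (e) and (d)). It remains to treat (c), (d), (e), each of which is closed under Berglund--H\"ubsch transposition: the transpose of a length-$2$ chain is a length-$2$ chain with exponents interchanged, while a length-$2$ loop and a monomial $x^{p}$ are self-transpose.

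For each of these three families I would first record the canonical data: for $f_1\oplus f_2$ the canonical weights are the $w^{(1)}_i d_2$ and $w^{(2)}_j d_1$ with total degree $d_1 d_2$, so $c=\gcd(c_1 d_2, c_2 d_1)$ in terms of the gcd of weights $c_\nu$ and the degree $d_\nu$ of each block; and I would write down $\zeta_f(t)$ and $\zeta_{f^T}(t)$ by the Varchenko formula (equivalently, from the chain/loop formulas in the proof of Theorem~\ref{theo1} applied to the length-$2$ block together with $\zeta_{x^{p}}(t)=1-t^{p}$, combined via Thom--Sebastiani). Then, using the factor-by-factor criterion that $(1-t^{m})^{s}$ has a ``root'' of degree $k$ iff an explicit divisibility condition on $m,k,s$ holds, I would determine exactly for which exponents $\widetilde{\zeta}_f(t)$ has a ``root'' of degree $c$ and $\widetilde{\zeta}_{f^T}(t)$ one of degree $c^T$; whenever both exist I would write them out and check the Saito duality (\ref{eq_theo2}) with respect to $d=d^T$ by a direct comparison of cyclotomic factors. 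This gives statement 1). Comparing the two existence conditions yields statement 3): in (b), (d), (e) they coincide, while in (c) they differ precisely for $f=x_1^{2}x_2+x_2^{p}+x_3^{p}$ with $p$ odd, where $\widetilde{\zeta}_f$ has a ``root'' but $\widetilde{\zeta}_{f^T}$ has none.

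For statement 2), in every case in which both ``roots'' exist I would test solvability of the congruence system (\ref{modk}) modulo $c$ (and $c^T$). When it is solvable, the geometric root $\widehat{h}_f=\Sigma^{-1}\circ\Gamma_{1/c}$ exists and its reduced zeta function is computed exactly as in the proof of Theorem~\ref{theo1}, by stratifying the Milnor fibre into coordinate tori and reading off the order of $\widehat{h}_f$ on each stratum; being a formal $c$-th ``root'' of $\widetilde{\zeta}_f$ it is automatically covered by the duality of statement 1), so no uniqueness of ``roots'' is needed, and the stratum-wise computation shows that all geometric roots of a given polynomial share the same reduced zeta function regardless of which solution of (\ref{modk}) is used. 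The obstruction analysis isolates exactly two obstructions: for a Fermat, (\ref{modk}) is solvable modulo $c$ iff $\gcd(p_i,c)=1$ for all $i$, which holds iff $c=1$, and one checks that the only Fermat with $c>1$ whose $\widetilde{\zeta}_f$ still admits a ``root'' is $x_1^{2}+x_2^{2}+x_3^{p}$ with $p$ odd, excluded by hypothesis; in case (d), (\ref{modk}) modulo $c$ reduces to $\gcd(p_3,c)=1$, which fails precisely when $p_3=p_1p_2-1$ and $\gcd(p_1-1,p_2-1)=1$ --- the exceptional family in 2) --- while in case (c) it is always consistent.

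The main obstacle is the elementary number theory behind cases (c) and (d): pinning down the precise divisibility conditions on the $p_i$ under which a formal ``root'' exists and under which (\ref{modk}) is solvable, and then checking that the exceptional families that emerge are exactly those in the statement --- in particular the slightly delicate point that $x_1^{2}+x_2^{2}+x_3^{p}$ is the unique Fermat with non-reduced canonical weight system whose reduced monodromy zeta function still has a ``root''.
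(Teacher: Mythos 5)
Your overall strategy is the same as the paper's: reduce via the Kreuzer--Skarke classification to the five shapes (chain-3, loop-3, chain-2 plus $x_3^{p_3}$, loop-2 plus $x_3^{p_3}$, Brieskorn--Pham), dispose of the length-3 chain and loop by Theorem~\ref{theo1}, and handle the remaining three families by writing the Varchenko zeta functions and doing a factor-by-factor divisibility analysis, testing solvability of (\ref{modk}) for the geometric roots. However, as written the proposal has a genuine gap: everything that makes the theorem nontrivial is deferred rather than proved. Statements such as ``one checks that the only Fermat with $c>1$ whose $\widetilde{\zeta}_f$ admits a root is $x_1^2+x_2^2+x_3^p$ with $p$ odd'', ``in case (d) the failure occurs precisely when $p_3=p_1p_2-1$ and $\gcd(p_1-1,p_2-1)=1$'', and ``in case (c) it is always consistent'' are exactly the content of the theorem (you essentially restate the exceptional families from the statement), and they require the delicate subcase analysis the paper actually carries out: tracking when exponents coincide ($p_1=p_2$, $p_2=p_3$, $p_3=p_1p_2/c_1$, $p_3=(p_1p_2-1)/c_1$, etc.), computing the resulting exponents $s_m$ modulo $c$, and ruling roots in or out (this is where, e.g., the $\widetilde E_6$-type case $x_1^2x_2+x_2^3+x_3^3$ and the uniqueness of the root in the case $p_3=p_1p_2-1$, $c_1=1$ emerge). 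Your proposal acknowledges this as ``the main obstacle'' but does not resolve it, so the identification of the exceptional families in 2) and 3) is asserted, not established.

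There is also a logical misstep in your argument for statement 2): you claim that since the geometric root's zeta function is a formal $c$-th ``root'' of $\widetilde{\zeta}_f$, it is ``automatically covered by the duality of statement 1), so no uniqueness of roots is needed''. This inference is invalid: statement 1) only produces \emph{some} pair of roots satisfying (\ref{eq_theo2}), and formal roots are not unique --- the paper's own example after Theorem~\ref{theo1} exhibits a second root $(1-t^{60})/\bigl((1-t^5)^3(1-t)\bigr)$ of $\widetilde{\zeta}_f$ which is \emph{not} Saito dual to $\widetilde{\zeta}_{f^T}^{(1/c^T)}$. So to get 2) you must explicitly compute the zeta functions of the geometric roots (stratum by stratum, as you propose) and verify that these particular roots are the Saito-dual pair, which is what the paper does in cases b) and c); and in the one case of 2) where geometric roots fail ($p_3=p_1p_2-1$, $c_1=1$) the paper proves the formal root is unique and self-dual, a step your outline does not supply.
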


\begin{proof}
This is already proved for polynomials of chain and loop types (Theorem~\ref{theo1}). (In these cases geometric roots of the monodromy transformations always exist.) We have to prove this for the other types.

In what follows we consider all polynomials up to permutation of the variables. Let $s_m$ be the power of the binomial $(1-t^m)$ in the decomposition of $\widetilde{\zeta}_f(t)$:
$$
\widetilde{\zeta}_f(t)=\prod_{m\ge 1}(1-t^m)^{s_m}.
$$

\smallskip
a) Let
$f(x_1, x_2, x_3)=x_1^{p_1}+ x_2^{p_2} + x_3^{p_3}$
(Brieskorn-Pham type). One has $f^T=f$. We assume that $p_1\le p_2\le p_3$. Note that $p_1 \geq 2$. The canonical weight system of $f$ is
$(p_2p_3, p_1p_3, p_1p_2; p_1p_2p_3)$. The reduced monodromy zeta function of $f$ is
\begin{equation}\label{zeta_a}
\widetilde{\zeta}_f(t)=\frac{\prod\limits_{i}(1-t^{p_i})(1-t^{p_1p_2p_3/c})^c}{\prod\limits_{i<j}(1-t^{p_ip_j/c_{ij}})^{c_{ij}}(1-t)}\,,
\end{equation}
where $c_{ij}=\gcd{(p_i,p_j)}$.

If the exponents $p_1$, $p_2$, $p_3$, are pairwise coprime, i.e. $c=c_{ij} =1$, then the reduced zeta-function (\ref{zeta_a}) is Saito self-dual with respect to $d=p_1p_2p_3$. The monodromy transformation of $f$ has geometric roots if and only if the exponents $p_1$, $p_2$, $p_3$ are pairwise coprime.

If $p_1<p_2$, then $s_{p_1}=1$ and in order to have a ``root'' of $\widetilde{\zeta}_f(t)$ of degree $c$ one needs $\gcd{(p_1,c)}=1$. In this case $\gcd{(p_1, p_2p_3)}=c_{12}=c_{13}=1$, $c=c_{23}=\gcd{(p_2, p_3)}$. If in addition $p_2<p_3$, then $s_{p_2}=1$ and in order to have a ``root'' of $\widetilde{\zeta}_f(t)$ one needs $\gcd{(p_2,c)}=1$.
This implies that $c=1$.

If $p_1<p_2=p_3$ and $\gcd{(p_1,c)}=1$, then $c_{23}=c=p_2$ and $s_{p_2}=2-c$. If $c\ne 2$, then $\widetilde{\zeta}_f(t)$ has no ``root'' of degree $c=p_2$. If $c=2$, then $p_1=1$ and $f$ has no critical point at the origin.

If $p_1=p_2<p_3$, then $s_{p_1}=2-p_1$ and $p_1\vert c$. If $p_1\ne 2$, then $\widetilde{\zeta}_f(t)$ has no ``root'' of degree $c$. If $p_1=2$, then $f$ is of the form $x_1^2+x_2^2+x_3^p$ excluded from
consideration.

Finally if $p_1=p_2=p_3$, then $c=p_1^2$, $s_{p_1}=p_1^2-3p_1+3$, $c>s_{p_1}>0$, and $\widetilde{\zeta}_f(t)$ has no ``root'' of degree $c$.

\smallskip
b) Let
$f(x_1, x_2, x_3)=x_1^{p_1}x_2+ x_2^{p_2}x_1 + x_3^{p_3}$. One has $f^T=f$. The canonical weight system of $f$ is
$(p_3(p_2-1), p_3(p_1-1), p_1p_2-1; p_3(p_1p_2-1))$. The reduced monodromy zeta function of $f$ is
\begin{equation}\label{zeta_b}
\widetilde{\zeta}_f(t)=\frac{(1-t^{p_3})(1-t^{p_3(p_1p_2-1)/c})^c}{(1-t^{(p_1p_2-1)/c_{1}})^{c_{1}}(1-t)}\,,
\end{equation}
where $c_{1}=\gcd{(p_2-1,p_1-1)}$.

If $\gcd{(p_3,c)}=1$, then $c=\gcd{(p_2-1,p_1-1)}=c_1$ and geometric roots of the monodromy transformation of $f$ of degree $c$ exist. (They exist for $x_1^{p_1}x_2+ x_2^{p_2}x_1$ due to Theorem~\ref{theo1} because $c=c_1$, and for $x_3^{p_3}$ since $\gcd{(p_3,c)}=1$.) The geometric roots have one and the same reduced zeta function
$$
\widetilde{\zeta}^{(1/c)}_f(t)=\frac{(1-t^{p_3})(1-t^{p_3(p_1p_2-1)})}{(1-t^{p_1p_2-1})(1-t)}\,,
$$
which is Saito self dual with respect to $d=p_3(p_1p_2-1)$.

Suppose that $\gcd{(p_3,c)}\ne 1$. In this case geometric roots of the monodromy transformation of $f$ do not exist.

If $p_3 \neq (p_1p_2-1)/c_1$, then $s_{p_3} \equiv 1\mod c$ and $\widetilde{\zeta}_f(t)$ has no ``root'' of degree $c$.

If $p_3=(p_1p_2-1)/c_1$, then $p_3\vert c$ and $s_{p_3} \equiv 1-c_1\mod c$. If $c_1\ne 1$, then (since $p_3\vert c$) in order to have a ``root'' of degree $c$ of $(1-t^{p_3})^{s_{p_3}}$ it is necessary that $p_3\vert (c_1-1)$, however $p_3> p_1-1$ and $p_3> p_2-1$ and therefore $p_3>c_1$. Thus $\widetilde{\zeta}_f(t)$ has no ``root'' of degree $c$. If $c_1=1$ (i.e. $p_3=p_1p_2-1$) then
$$
\widetilde{\zeta}_f(t) = \frac{(1-t^{p_3})^{p_3}}{1-t} ,
$$
 a ``root'' of $\widetilde{\zeta}_f(t)$ of degree $c=p_3$ exists, moreover it is unique, namely equal to $\widetilde{\zeta}^{(1/c)}_f(t)=\frac{1-t^{d}}{1-t}$, and thus is Saito self dual.

\smallskip
c) Let
$f(x_1, x_2, x_3)=x_1^{p_1}x_2+ x_2^{p_2} + x_3^{p_3}$. The transpose $f^T$ is obtained from $f$ by interchanging the exponents $p_1$ and $p_2$. The canonical weight system of $f$ is
$(p_3(p_2-1), p_3p_1, p_1p_2; p_1p_2p_3)$. The reduced monodromy zeta function of $f$ is
\begin{equation}\label{zeta_c}
\widetilde{\zeta}_f(t)=\frac{(1-t^{p_2})(1-t^{p_3})(1-t^{p_1p_2p_3/c})^c}{(1-t^{p_1p_2/c_1})^{c_1}
(1-t^{p_2p_3/c_2})^{c_2}(1-t)}\,,
\end{equation}
where $c_{1}=\gcd{(p_2-1,p_1)}$, $c_2=\gcd{(p_2,p_3)}$.
Geometric roots of the monodromy transformation of $f$ exist if and only if $\gcd{(p_3,c)}= 1$. (In this case $c=c_1$, geometric roots for $x_1^{p_1}x_2+ x_2^{p_2}$ exist due to Theorem~\ref{theo1} because $c=c_1$, and for $x_3^{p_3}$ since $\gcd{(p_3,c)}=1$.) In this case $c_2=1$ and the reduced zeta function of any geometric root of the monodromy transformation of $f$ is equal to 
$$
\widetilde{\zeta}^{(1/c)}_f(t)=\frac{(1-t^{p_2})(1-t^{p_3})(1-t^{p_1p_2p_3})}{(1-t^{p_1p_2})
(1-t^{p_2p_3})(1-t)}\,.
$$
In this case one also has $\gcd{(p_3,c^T)}= 1$ and
$$
\widetilde{\zeta}^{(1/c^T)}_f(t)=\frac{(1-t^{p_1})(1-t^{p_3})(1-t^{p_1p_2p_3})}{(1-t^{p_1p_2})
(1-t^{p_1p_3})(1-t)}=\widetilde{\zeta}^{(1/c)*}_f(t)\,.
$$

Suppose that $\gcd{(p_3,c)}\ne 1$. If $s_{p_3}=1$, then a ``root'' of degree $c$ of $\widetilde{\zeta}_f(t)$ does not exist. Otherwise $p_3$ has to coincide with $p_2$, or with $p_1p_2/c_1$, or with $p_2p_3/c_2$.
In all these cases $p_2\vert p_3$ (and therefore $p_3=p_2p_3/c_2$), and $p_2\vert c$.

If $p_2\ne p_3$, then $s_{p_2}$ is equal either to 1 (and therefore no ``root'' of $\widetilde{\zeta}_f(t)$ exists), or to $(1-c_1)$ (if $p_1p_2/c_1=p_2$). In the latter case, for the existence of a ``root'', it is necessary that $p_2\vert (c_1-1)$. However, $c_1<p_2$ and this can only happen if $c_1=1$. This implies that $p_1=1$. In this case $f^T$ has no critical point at the origin.

If $p_2=p_3$, then $c_2=p_2$, $p_2 | c$, and $s_{p_2} \mod c$ is equal either to $(2-p_2)$, or to $(2-c_1-p_2)$ (if $p_2=p_1p_2/c_1$, i.e. $p_1=c_1$). In the first case a ``root'' of $\widetilde{\zeta}_f(t)$ can only exist if $p_2=2$. Then $c_1=1$, $c=2$, $s_{2p_1}=-1$ and no ``root'' of $\widetilde{\zeta}_f(t)$ of degree 2 exists.
In the second case either $c_1=p_2-1=p_1$, $s_{p_2}=3$ and therefore a ``root'' exists if and only if $p_2=p_3=3$, $p_1=2$ (in this case the polynomial $f$ has a singularity of type $\widetilde{E}_6$), or $c_1\le (p_2-1)/2$, $-3p_2/2<s_{p_2}\le -p_2+1$ and a ``root'' of $\widetilde{\zeta}_f(t)$ can exist only if $s_{p_2}=-p_2$, i.e. $c_1=p_1=2$ and $p_2$ is odd.

If $p_1=2$ and $p_2=p_3=p$ is odd, 
$$
\widetilde{\zeta}_f(t)=\frac{(1-t^{p})^p}{1-t}
$$
and a ``root'' of degree $c=2p$ exists.
In this case
$$
\widetilde{\zeta}_{f^T}(t)=\frac{(1-t^2)(1-t^p)(1-t^{2p})^{p-2}}{1-t}
$$
and a ``root'' of degree $c^T=p$ does not exist.
\end{proof}

\begin{remarks}
{\bf 1.} For the invertible polynomials in the last table of \cite{ET}, no roots of the monodromy zeta function exist.

\noindent {\bf 2.}
One can show that for a non-degenerate invertible polynomial in two variables a ``root'' of the monodromy zeta function exists if and only if a geometric root of the monodromy transformation exists. In this case, a geometric root of the monodromy transformation of $f^T$ also exists and
$$
\widetilde{\zeta}_{f^T}^{(1/c^T)}(t) =
\left(\widetilde{\zeta}_{f}^{(1/c)^\ast}(t)\right)^{-1}\, .
$$
\end{remarks}


\bigskip
\noindent Leibniz Universit\"{a}t Hannover, Institut f\"{u}r Algebraische Geometrie,\\
Postfach 6009, D-30060 Hannover, Germany \\
E-mail: ebeling@math.uni-hannover.de\\

\medskip
\noindent Moscow State University, Faculty of Mechanics and Mathematics,\\
Moscow, GSP-1, 119991, Russia\\
E-mail: sabir@mccme.ru

\end{document}